\newtheorem{theorem}{\bf Theorem}[section]
\newtheorem{prop}{\bf Proposition}[section]
\numberwithin{equation}{section}
\begin{document}

\baselineskip=17pt

\title[]
{Continuous-time Zero-Sum Stochastic Game with Stopping and Control}

\author[Chandan Pal]{Chandan Pal}
\address{Department of Mathematics\\
Indian Institute of Technology Guwahati\\
Guwahati, Assam, India}
\email{cpal@iitg.ernet.in}

\author[Subhamay Saha]{Subhamay Saha}
\address{Department of Mathematics\\
Indian Institute of Technology Guwahati\\
Guwahati, Assam, India}
\email{saha.subhamay@iitg.ernet.in}


\date{}

\begin{abstract}
We consider a zero-sum stochastic game for continuous-time Markov chain with countable state space and unbounded transition and pay-off rates. The additional feature of the game is that the controllers together with taking actions are also allowed to stop the process. Under suitable hypothesis we show that the game has a value and it is the unique solution of certain dynamic programming inequalities with bilateral constraints. In the process we also prescribe a saddle point equilibrium. 
\vspace{2mm}

\noindent
{\bf Keywords:}
zero-sum game; stopping time; optimal strategy; dynamic programming inequalities.
\end{abstract}

\maketitle

\section{INTRODUCTION}
In this article we consider a zero-sum stochastic game for continuous-time Markov chain. The transition and reward rates are assumed to be unbounded. The additional feature is that players other than taking actions also has the option of stopping the game. We show that the game has a value and that it is the unique solution of a set of dynamic programming inequalities with bilateral constrains. The existence of optimal strategies for both players is also established. These optimal strategies also give optimal stopping rules for both players. Stochastic control problems for continuous time Markov chains, both for one controller and multi-controller setup has been studied by a variety of authors, see \cite{c23, c20, c22, c21, c8} and references therein. Stochastic games with only stopping was introduced by Dynkin \cite{c3}. Such games also known in literature as Dynkin games has been investigated for discrete time case, see (\cite{c19, c10, c13, c18}) and references therein, as well as for continuous-time case, see (\cite{c2, c17, c16, c11, c14, c15}) and references therein. Stochastic games with control and stopping has been studied for discrete time case in \cite{c6}, and for continuous-time non-degenerate diffusion  in (\cite{c1, c5}). The authors in \cite{c24} consider a zero-sum stochastic game in a very general framework driven by Brownian motion and Poisson random measure. In their set-up the controllers are allowed to stop controlling the process at any time. However, the proof techniques of our paper are different and less technical as compared to \cite{c24}. Moreover, it is important to note that in \cite{c24} it is assumed that the diffusion co-efficient associated with the Brownian motion has to be invertible and hence can not be taken to be 0. Therefore, a wide class of stochastic game problems involving pure jump processes, for example controlled queues, can be much more suitably analysed using the tools and techniques of this paper as compared to \cite{c24}. We give an example of that nature in the last section. The rest of the paper is organized as follows. In section 2, we give the detailed problem formulation and in section 3 we prove the existence of value of the game and saddle point equilibrium. Finally, in section 4, we conclude with an illustrative example.

\section{Game Formulation}
The stochastic game model that we are interested in is given by $\{S,U,V,q,r,\psi_1,\psi_2\}$. The components have the following interpretation. $S$ is countable set and without any loss of generality we take $S=\{0,1,2,3, \cdots\}$. $S$ is the state space of the controlled continuous-time Markov chain. $U$ and $V$ are metric spaces representing the action sets of player $I$ and $II$ respectively. The component $q=[q(j|i,a,b)]$ is the controlled transition rate matrix, satisfying the following properties:

\noindent {\bf (i)} $ q(j|i,a,b)\geq 0 $ for all $i \in S $, $i\neq j$, $a \in U$, $b \in V$.

\noindent {\bf (ii)} It is assumed to be conservative, i.e., $$\sum_{j\in S}q(j|i,a,b)=0 \; \; \mbox{for all} \; i \in S, a\in U, b\in V .$$ 

 \noindent {\bf (iii)} 
We also assume it is stable, i.e., 
\[
q(i)=\sup_{a \in U, b \in V} \sum_{j\neq i}q(j|i,a,b)<\infty \mbox{ for all} \; i\in S \, .
\]

The reward rate is given by $r : S \times U \times V \to [0,\infty)$ and $\psi_i: S \rightarrow [0,\infty),\; i=1,2$ are the stopped pay-off functions for the players.
 
 At time $t=0$, we suppose that the  process starts from state $i$ and player $I$ and player $II$ independently chooses  actions $a$ and $b$ then player $II$ receives a reward at the rate $r(i,a,b)$ until the next jump epoch which occurs after an exponential ($\sum_{j\neq i}q(j|i,a,b)$) amount of time. The next state of the process is $j$ with probability $\dfrac{q(j|i,a,b)}{\sum_{j\neq i}q(j|i,a,b)}$. The game then repeats from the new sate $j$. If at state $i$, player $I$ decides to stop the game then, player $II$ receives a pay-off of $\psi_1(i)$, whereas if player $II$ decides to stop then she receives a pay-off equal to $\psi_2(i)$. Player $II$ tries to maximize her accumulated expected discounted reward, while player $I$ wishes to minimize the same. Here we will consider only randomized stationary control although things go through with randomized Markov control as well.
 
A randomized stationary control for player $I$ is a measurable function $\Phi: S \to \mathcal{P}(U)$. Similarly, $\Psi: S \to \mathcal{P}(V)$ is a randomized stationary control for player $II$. We denote by $\Pi^1$ and $\Pi^2$ the set of all randomized stationary controls for player $I$ and player $II$ respectively. In order to guarantee the existence of a non explosive process (finite jumps in a finite time) we assume the following:

\noindent {\bf Assumption (A1):}

\noindent There exists $N$ non-negative functions $w_n$ on $S$ and a positive constant $c$ such that for all $i \in S, a\in U, b\in V $,$$\sum_{j\in S}q(j|i,a,b)w_n(j)\leq w_{n+1}(i),\; \;  \mbox{for} \;  n=1,2,\cdots, N-1$$ and furthermore, $$\sum_{j\in S}q(j|i,a,b)w_N(j)\leq 0$$ and $$q(i)\leq c(w_1(i)+w_2(i)+\cdots+w_N(i)), \; \mbox{for all} \; i\in S.$$ 

It is well-known that under the above assumptions, for randomized stationary controls $(\Phi,\Psi)$  there exists a non explosive continuous time Markov chain, see \cite{c7}. We denote the state process by $X_t$ and let $U_t$ and $V_t$ denote the control processes for player $I$ and $II$ respectively.

Let $\{\mathcal{F}_t:t\geq 0\}$ denote the natural filtration of $\{X_t:t\geq 0\}$. Then a strategy for player $I$ is a pair $(\Phi,\Theta^1)$ where $\Phi \in \Pi^1$ and $\Theta^1$ is a $\mathcal{F}_t$-stopping time. Similarly, for player $II$ a strategy is a pair  $(\Psi,\Theta^2)$ where $\Psi \in \Pi^2$ and $\Theta^2$ is a $\mathcal{F}_t$-stopping time. The evaluation criterion is given by
\begin{align*}
J_{\alpha}(i,\Phi,\Psi,\Theta^1,\Theta^2)&=E_i^{\Phi,\Psi}\biggl[\int_0^{\Theta^1\wedge \Theta^2}e^{-\alpha t} r(X_t,U_t,V_t)dt\\&+e^{-\alpha(\Theta^1\wedge \Theta^2)}\bigl\{\psi_1(X_{\Theta^1})1_{\{\Theta_1<\Theta_2\}}+\psi_2(X_{\Theta^2})1_{\{\Theta_1\geq\Theta_2\}} \bigr\}\biggr],
\end{align*} where $\alpha >0$ is the discount factor, $1_{\{\cdot\}}$ is the indicator function and $E_i^{\Phi,\Psi}$ is the expectation operator with respect to the probability measure when the initial state is $i$ and player $I$ is using the control $\Phi$ and player $II$ is using the control $\Psi$.
Player $II$ wishes to maximize $J_{\alpha}(i,\Phi,\Psi,\Theta^1,\Theta^2)$ over her strategies $(\Psi,\Theta^2)$ and player $I$ wishes to minimize the same over all pairs $(\Phi,\Theta^1)$. Define 
\[
U(i)=\inf_{(\Phi,\Theta^1)}\sup_{(\Psi,\Theta^2)}J_{\alpha}(i,\Phi,\Psi,\Theta^1,\Theta^2)
\]
and 
\[
L(i)=\sup_{(\Psi,\Theta^2)}\inf_{(\Phi,\Theta^1)}J_{\alpha}(i,\Phi,\Psi,\Theta^1,\Theta^2).
\]
Then $U(i)$ is called the upper value of the game and $L(i)$ is called the lower value of the game. The game is said to have a value if $U(i)=L(i)$. 

A strategy $(\Phi^*, \Theta^{1*})$ is said to be optimal for player $I$ if 
\[
L(i)\geq J_{\alpha}(i,\Phi^*,\Psi,\Theta^{1*},\Theta^2) \; \mbox{for all} \; i \in S
\]
and for all strategies $(\Psi,\Theta^2)$ of player $II$. Analogously, A strategy $(\Psi^*, \Theta^{2*})$ is said to be optimal for player $II$ if 
\[
U(i)\leq  J_{\alpha}(i,\Phi,\Psi^*,\Theta^{1},\Theta^{2*}) \; \mbox{for all} \; i \in S
\]
and for all strategies $(\Phi,\Theta^1)$ of player $I$. $((\Phi^*, \Theta^{1*}),(\Psi^*, \Theta^{2*}))$ is called a saddle point equilibrium, if it exists.
\section{Existence of Value and Saddle Point Equilibrium}
In order to characterize the value of the game and to establish the existence of a saddle point equilibrium we will need the following assumption:

\noindent {\bf Assumption (A2):} 

\noindent {\bf (i)} $ U $ and $V$ are compact sets; 

\noindent {\bf (ii)} $r(i,a,b)$ and $ q(j|i,a,b)$ are continuous in  $(a,b)\in U \times V $;

\noindent {\bf (iii)} Let $W(i)=w_1(i)+w_2(i)+\cdots+w_N(i), \; \mbox{for all} \; i\in S.$ The function $\sum_{j\in S}q(j|i,a,b)W(j)$ is continuous in $(a,b)\in U \times V $; 

\noindent {\bf (iv)} there is a constant $M$ such that 
\[
r(i,a,b)\leq M W(i),  \; \mbox{for all} \; \; i\in S \; \mbox{and} \; (a,b)\in U \times V, 
\]
\[
\psi_1(i)\leq M W(i),  \; \mbox{for all} \; \; i\in S
\]
and 
\[
\psi_2(i)< \psi_1(i),  \; \mbox{for all} \; \; i\in S;
\]
\noindent {\bf (v)} there exists a non-negative function $\tilde{W}$ on $S$ and positive constants $ c$ and $\tilde{c}$ such that 
\[
q(i)W(i)\leq M \tilde{W}(i), \; \; \mbox{for all} \; \; i\in S
\]
and
\begin{align*}
 &\sum_{j\in S}q(j|i,a,b)\tilde{W}(j)\leq c\tilde{W}(i)+\tilde{c} \; \; \mbox{for all} \; \; i\in S\;\; \mbox{and} \;\; (a,b)\in U \times V.
\end{align*}
Set
\[
 B_W(S) \ = \ \biggl\{ f : S \to [0,\infty) \big| \sup_{i \in S} \frac{f(i)}{W(i)} < \infty \biggr\},
\]
where $W$ is as in \textbf{(A2)}.
 Define for $f\in B_W(S)$,
\begin{equation*}\label{vn}
\|f\|_W \ = \ \sup_{i \in S} \frac{f(i)}{W(i)} \, .
\end{equation*}
 Then
$B_W(S)$ is a Banach space with the norm $\|\cdot\|_W $.  \\
For any two states $i,j \in S$, any two probability measures $\mu \in \mathcal{P}(U) $ and $\nu \in \mathcal{P}(V)$ define
\[
\tilde{r}(i,\mu,\nu) = \int_{V}\int_{U}r(i,a,b)\mu(da)\nu(db)
\] 
and
\[
\tilde{q}(j|i,\mu,\nu) = \int_{V}\int_{U}q(j|i,a,b)\mu(da)\nu(db).
\] 
For $\phi \in B_W(S)$ define
\begin{align*}
H_{\alpha}^+(i,\phi)= \inf_{\mu \in \mathcal{P}(U)}\sup_{\nu\in \mathcal{P}(V)} \Big [ \tilde{r}(i,\mu,\nu) 
+  \sum_{j\in S}\tilde{q}(j|i,\mu,\nu)\phi(j) \Big ]  
\end{align*}
and
\begin{align*}
H_{\alpha}^-(i,\phi)= \sup_{\nu\in \mathcal{P}(V)}  \inf_{\mu \in \mathcal{P}(U)}\Big [ \tilde{r}(i,\mu,\nu) 
+  \sum_{j\in S}\tilde{q}(j|i,\mu,\nu)\phi(j) \Big ].
\end{align*}
Further define
\begin{align*}
I_{\alpha}^+(i,\phi)= \inf_{\mu \in \mathcal{P}(U)}\sup_{\nu\in \mathcal{P}(V)} \Big [\dfrac{ \tilde{r}(i,\mu,\nu)}{\alpha+q(i)+1} 
+ \dfrac{ q(i)+1}{\alpha+q(i)+1} \sum_{j\in S}\tilde{p}(j|i,\mu,\nu)\phi(j) \Big ] , 
\end{align*}
where $$\tilde{p}(j|i,\mu,\nu)=\dfrac{\tilde{q}(j|i,\mu,\nu)}{ q(i)+1}+\delta_{ij}$$ ($\delta_{ij}$ is the Kronecker delta). Similarly define,
\begin{align*}
I_{\alpha}^-(i,\phi)= \sup_{\nu\in \mathcal{P}(V)} \inf_{\mu \in \mathcal{P}(U)} \Big [\dfrac{ \tilde{r}(i,\mu,\nu)}{\alpha+q(i)+1}
+ \dfrac{ q(i)+1}{\alpha+q(i)+1} \sum_{j\in S}\tilde{p}(j|i,\mu,\nu)\phi(j) \Big ] . 
\end{align*}
Note that by Fan's minimax theorem \cite{c4}, $H_{\alpha}^+=H_{\alpha}^-:=H_{\alpha}$ and $I_{\alpha}^+=I_{\alpha}^-:=I_{\alpha}$. Now consider the following dynamic programming inequalities with bilateral constraints:
\begin{align}\label{dpi}
&\psi_2(i)\leq \phi(i) \leq \psi_1(i)&& \nonumber \\
&\alpha \phi(i) -H_{\alpha}(i,\phi)= 0 && {\rm if} \; \; \psi_2(i)< \phi(i) <\psi_1(i), \nonumber \\
&\alpha \phi(i) -H_{\alpha}(i,\phi)\geq 0 &&  {\rm if} \; \; \psi_2(i)= \phi(i), \\
&\alpha \phi(i) -H_{\alpha}(i,\phi)\leq 0 &&  {\rm if} \; \; \psi_1(i)= \phi(i). \nonumber 
\end{align} Now, 
\begin{align*}
&\alpha \phi(i) -H_{\alpha}(i,\phi)= 0\\
\Longleftrightarrow \, &\alpha \phi(i)=H_{\alpha}(i,\phi)\\
\Longleftrightarrow \, &(\alpha+q(i)+1)\phi(i)= \inf_{\mu \in \mathcal{P}(U)}\sup_{\nu\in \mathcal{P}(V)} \Big [ \tilde{r}(i,\mu,\nu) 
+  \sum_{j\in S}\tilde{q}(j|i,\mu,\nu)\phi(j)+(q(i)+1)\phi(i) \Big ]\\
\Longleftrightarrow \, &\phi(i)=I_{\alpha}(i,\phi)\,.
\end{align*}
Similarly, for the inequalities. Thus, \eqref{dpi} is equivalent to
\begin{align}\label{DPI}
 &\psi_2(i)\leq \phi(i) \leq \psi_1(i)&& \nonumber \\
 &\phi(i) -I_{\alpha}(i,\phi)= 0 && {\rm if} \; \; \psi_2(i)< \phi(i) <\psi_1(i) \nonumber \\
&\phi(i) -I_{\alpha}(i,\phi)\geq 0 &&  {\rm if} \; \; \psi_2(i)= \phi(i) \\
 &\phi(i) -I_{\alpha}(i,\phi)\leq 0 &&  {\rm if} \; \; \psi_1(i)= \phi(i). \nonumber 
\end{align}
\begin{prop}\label{equiv}
Under assumptions \textbf{(A1)} and \textbf{(A2)}, the following are equivalent.
\begin{itemize}
\item[(i)] $\phi$ satisfies (\ref{DPI}).
\item[(ii)] $\phi(i)=\min\{\max\{I_{\alpha}(i,\phi);\psi_2(i)\};\psi_1(i)\}$.
\item[(iii)] $\phi(i)=\max\{\min\{I_{\alpha}(i,\phi);\psi_1(i)\};\psi_2(i)\}$.
\end{itemize}
\end{prop}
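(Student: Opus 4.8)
The plan is to prove the whole chain of equivalences by reducing everything, pointwise in $i$, to an elementary property of the clamping (median) operation on the real line. Fix $i \in S$ and write $a = \psi_2(i)$, $b = \psi_1(i)$ and $g = I_{\alpha}(i,\phi)$. By \textbf{(A2)(iv)} we have $a < b$, and under \textbf{(A1)}--\textbf{(A2)} the quantity $g = I_{\alpha}(i,\phi)$ is finite and well defined for every $\phi \in B_W(S)$ (with $I_\alpha^+ = I_\alpha^-$ by Fan's theorem, as already noted), so all three expressions in the statement make sense and the argument is purely a matter of comparing real numbers at each state.

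First I would dispose of the equivalence of (ii) and (iii). These are simply the two standard representations of the projection of $g$ onto $[a,b]$: for any reals with $a \le b$ one has the identity
\[
\min\{\max\{g; a\}; b\} = \max\{\min\{g; b\}; a\},
\]
both sides equalling the median of $a, g, b$. This is checked by splitting into $g < a$, $a \le g \le b$, $g > b$, in which the two sides equal $a$, $g$, $b$ respectively. Applying this with $a = \psi_2(i) < \psi_1(i) = b$ shows that the right-hand sides of (ii) and (iii) agree at every $i$, hence (ii) $\Leftrightarrow$ (iii).

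The substantive step is (i) $\Leftrightarrow$ (ii), which I would establish by a three-regime case analysis. Write $P(g) = \min\{\max\{g; a\}; b\}$ for the clamp in (ii); the elementary facts are $P(g) = a \iff g \le a$, $P(g) = b \iff g \ge b$, and $a < P(g) < b \iff a < g < b$, in which last case $P(g) = g$. For (ii) $\Rightarrow$ (i): from $\phi(i) = P(g)$ the bilateral constraint $a \le \phi(i) \le b$ is immediate; on the interior regime $a < \phi(i) < b$ we get $P(g) = g$, so $\phi(i) - I_{\alpha}(i,\phi) = 0$; while $\phi(i) = a$ forces $g \le a = \phi(i)$, i.e. $\phi(i) - I_{\alpha}(i,\phi) \ge 0$, and $\phi(i) = b$ forces $g \ge b = \phi(i)$, i.e. $\phi(i) - I_{\alpha}(i,\phi) \le 0$; this is exactly (\ref{DPI}). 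Conversely, for (i) $\Rightarrow$ (ii) I read the same three mutually exclusive and exhaustive regimes (disjoint since $a < b$) off (\ref{DPI}): if $a < \phi(i) < b$ then $\phi(i) = g$, whence $a < g < b$ and $P(g) = g = \phi(i)$; if $\phi(i) = a$ the sign condition gives $g \le a$, so $P(g) = a = \phi(i)$; and if $\phi(i) = b$ then $g \ge b$, so $P(g) = b = \phi(i)$. In every case $\phi(i) = P(g)$, which is (ii).

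I do not expect a genuine obstacle here: the entire content is the clamping identity together with careful bookkeeping of the three cases. The only points demanding attention are the boundary values, where $g$ equals $a$ or $b$ and correctly lands in the closed-inequality branches of (\ref{DPI}), and the use of the strict inequality $\psi_2(i) < \psi_1(i)$ to keep the three regimes disjoint. The hypotheses \textbf{(A1)}--\textbf{(A2)} enter only to guarantee that $I_{\alpha}(i,\phi)$ is finite and well defined on $B_W(S)$; they play no further role in the equivalence itself.
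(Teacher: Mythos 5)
Your proof is correct. The core of your argument for (i) $\Leftrightarrow$ (ii) is the same pointwise three-case analysis as the paper's (interior case, $\phi(i)=\psi_2(i)$, $\phi(i)=\psi_1(i)$), relying in the same way on the strict inequality $\psi_2(i)<\psi_1(i)$ from \textbf{(A2)(iv)}; you merely package it through the elementary facts about the clamp $P(g)=\min\{\max\{g;a\};b\}$, which tightens the bookkeeping but is not a different idea. Where you genuinely depart from the paper is in handling (iii): the paper proves only (i) $\Leftrightarrow$ (ii) and dismisses the rest with ``others can be proved similarly,'' implicitly asking the reader to redo the case analysis with the roles of $\psi_1$ and $\psi_2$ exchanged. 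You instead prove (ii) $\Leftrightarrow$ (iii) directly from the median identity $\min\{\max\{g;a\};b\}=\max\{\min\{g;b\};a\}$ (valid whenever $a\le b$), so the full chain (i) $\Leftrightarrow$ (ii) $\Leftrightarrow$ (iii) follows with no repeated casework. This is arguably tidier, and it makes explicit a fact the paper never states: that the right-hand sides of (ii) and (iii) are literally the same function of $(I_{\alpha}(i,\phi),\psi_1(i),\psi_2(i))$, namely the projection of $I_{\alpha}(i,\phi)$ onto $[\psi_2(i),\psi_1(i)]$. Your closing observation that \textbf{(A1)}--\textbf{(A2)} enter only through the well-definedness of $I_{\alpha}$ on $B_W(S)$ and the strict separation $\psi_2<\psi_1$ is also accurate, and matches where the paper invokes \textbf{(A2)} in its own argument.
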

\begin{proof}
Here we prove only the equivalence of (i) and (ii), others can be proved similarly. Suppose (i) is true and $i\in S$ is such that
$ \psi_2(i)< \phi(i) <\psi_1(i)$. Then 
\begin{align*}
 \phi(i) &= I_{\alpha}(i,\phi)\\
 &= \max\{I_{\alpha}(i,\phi);\psi_2(i)\}\\
&= \min\{\max\{I_{\alpha}(i,\phi);\psi_2(i)\};\psi_1(i)\} \nonumber 
\end{align*}
If $ \psi_2(i)= \phi(i)$. Then $\phi(i) \geq I_{\alpha}(i,\phi)$. Therefore
\begin{align*}
 \phi(i)  &= \max\{I_{\alpha}(i,\phi);\psi_2(i)\}\\
&= \min\{\max\{I_{\alpha}(i,\phi);\psi_2(i)\};\psi_1(i)\}, \nonumber 
\end{align*}
since $\psi_2(i)\leq \phi(i) \leq \psi_1(i)$.

If $ \psi_1(i)= \phi(i)$, then $\phi(i) \leq I_{\alpha}(i,\phi)$ and $\psi_2(i)\leq \phi(i) \leq \psi_1(i)$. Hence
\begin{align*}
 I_{\alpha}(i,\phi) = \max\{I_{\alpha}(i,\phi);\psi_2(i)\}.
\end{align*}
Therefore
\begin{align*}
\phi(i)=\psi_1(i)= \min\{\max\{I_{\alpha}(i,\phi);\psi_2(i)\};\psi_1(i)\}.  
\end{align*}
Now assume that (ii) is true, i.e.,
\begin{align*}
\phi(i)= \min\{\max\{I_{\alpha}(i,\phi);\psi_2(i)\};\psi_1(i)\}.  
\end{align*}
Suppose $i\in S$ is such that $ \psi_2(i)< \phi(i) <\psi_1(i)$. Then 
\begin{align*}
 \phi(i)  &= \max\{I_{\alpha}(i,\phi);\psi_2(i)\} \\
 &= I_{\alpha}(i,\phi)
\end{align*}
If $ \psi_2(i)= \phi(i)$. Then by assumption \textbf{(A2)}, $\phi(i) < \psi_1(i)$. Therefore
\begin{align*}
 \phi(i)  &= \max\{I_{\alpha}(i,\Phi);\psi_2(i)\}\\
&\geq I_{\alpha}(i,\phi).
\end{align*}
If $ \psi_1(i)= \phi(i)$. Then by assumption \textbf{(A2)}, $\phi(i) > \psi_2(i)$. Therefore $\phi(i)=\psi_1(i)  \leq \max\{I_{\alpha}(i,\phi);\psi_2(i)\}$, which implies that  $ \phi(i) \leq  I_{\alpha}(i,\phi)$ (since  $\phi(i) > \psi_2(i)$ ). It is easy to see that $\psi_2(i)\leq \phi(i) \leq \psi_1(i)$. Hence, $\phi$ satisfies (\ref{DPI}).
\end{proof}
Now define the operator $T:B_W(S) \to B_W(S) $ by
\begin{align*}
T\phi(i):= \min\{\max\{I_{\alpha}(i,\phi);\psi_2(i)\};\psi_1(i)\}.  
\end{align*}
Let $u_0(i)=\psi_2(i)$ and $u_n=T u_{n-1},\; n \geq 1$. Then the following is true.
\begin{prop}\label{fp}
Under assumptions \textbf{(A1)} and \textbf{(A2)}, the sequence of functions $\{u_n\}_{n\geq 0}$ is a non-decreasing and there exists $u^* \in B_W(S)$ such that $\displaystyle{\lim_{n \to \infty}}u_n=u^*$. Further $u^*$ is a fixed point of $T$, i.e., $T u^*=u^*$.
\end{prop}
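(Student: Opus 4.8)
The plan is to exhibit $\{u_n\}$ as a monotone sequence trapped between $\psi_2$ and $\psi_1$, whose pointwise limit is forced to be a fixed point of $T$ by a continuity argument. The first and decisive structural fact is that, for each fixed $i$, the kernel $\tilde p(\cdot\,|i,\mu,\nu)$ is a genuine probability measure on $S$: its entries are nonnegative (for $j\neq i$ this is immediate, and for $j=i$ one has $\tilde p(i|i,\mu,\nu)\ge 1-\frac{q(i)}{q(i)+1}>0$ since $\tilde q(i|i,\mu,\nu)\ge -q(i)$), and they sum to $1$ because $q$ is conservative. Consequently $\phi\mapsto I_{\alpha}(i,\phi)$ is monotone (if $\phi\le\phi'$ pointwise then $\sum_{j}\tilde p(j|i,\mu,\nu)\phi(j)\le\sum_{j}\tilde p(j|i,\mu,\nu)\phi'(j)$, and the inf--sup preserves the inequality), and since $\min$ and $\max$ are monotone, $T$ is monotone as well.

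Monotonicity of the sequence then follows by induction. For the base step, $u_1(i)=T u_0(i)=\min\{\max\{I_{\alpha}(i,\psi_2);\psi_2(i)\};\psi_1(i)\}\ge\min\{\psi_2(i);\psi_1(i)\}=\psi_2(i)=u_0(i)$, using $\psi_2<\psi_1$ from \textbf{(A2)}(iv); and if $u_n\ge u_{n-1}$ then $u_{n+1}=Tu_n\ge Tu_{n-1}=u_n$ by monotonicity of $T$. For the upper bound, the outer $\min$ in the definition of $T$ gives $u_n(i)=Tu_{n-1}(i)\le\psi_1(i)\le MW(i)$ for every $n\ge1$, while $u_n\ge u_0=\psi_2\ge0$. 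Hence for each $i$ the sequence $\{u_n(i)\}$ is non-decreasing and bounded above, so it converges to some $u^*(i)$ with $0\le u^*(i)\le MW(i)$; thus $u^*\in B_W(S)$ with $\|u^*\|_W\le M$.

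It remains to prove $Tu^*=u^*$. One inequality is free: since $u_{n-1}\le u^*$, monotonicity gives $u_n=Tu_{n-1}\le Tu^*$, and letting $n\to\infty$ yields $u^*\le Tu^*$. For the reverse inequality it suffices, because $\min$ and $\max$ are continuous, to show $I_{\alpha}(i,u_{n-1})\to I_{\alpha}(i,u^*)$ for each $i$; then $u_n=Tu_{n-1}\to Tu^*$, forcing $Tu^*=u^*$. Writing $\Delta_n:=u^*-u_{n-1}\ge0$, the elementary estimate $|\inf_{\mu}\sup_{\nu}F-\inf_{\mu}\sup_{\nu}G|\le\sup_{\mu,\nu}|F-G|$ reduces the claim to
\[
\sup_{\mu\in\mathcal{P}(U),\,\nu\in\mathcal{P}(V)}\ \sum_{j\in S}\tilde p(j|i,\mu,\nu)\,\Delta_n(j)\ \longrightarrow\ 0\qquad(n\to\infty).
\]

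The heart of the proof is this uniform convergence, and it is where \textbf{(A2)} and the drift bound from \textbf{(A1)} enter. I would argue by Dini's theorem on the compact set $\mathcal{P}(U)\times\mathcal{P}(V)$. Set $g_n(\mu,\nu):=\sum_{j}\tilde p(j|i,\mu,\nu)\Delta_n(j)$. Since $\Delta_n(j)\downarrow0$ pointwise and $0\le\Delta_n(j)\le MW(j)$ with $\sum_{j}\tilde p(j|i,\mu,\nu)W(j)\le\frac{W(i)}{q(i)+1}+W(i)<\infty$ (the latter from $\sum_{j}q(j|i,a,b)W(j)\le W(i)$, obtained by telescoping the inequalities in \textbf{(A1)}), dominated convergence gives $g_n(\mu,\nu)\downarrow0$ pointwise in $(\mu,\nu)$. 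The remaining point is continuity of each $g_n$: for $(\mu_k,\nu_k)\to(\mu,\nu)$ weakly one has $\tilde q(j|i,\mu_k,\nu_k)\to\tilde q(j|i,\mu,\nu)$ for every $j$ by \textbf{(A2)}(ii), while \textbf{(A2)}(iii) makes $(\mu,\nu)\mapsto\sum_{j}\tilde q(j|i,\mu,\nu)W(j)$ continuous; a generalized dominated convergence argument (Pratt's lemma, with the convergent $W$-weighted sums serving as the varying dominating integrals) then transfers this to $\sum_{j}\tilde q(j|i,\mu,\nu)\Delta_n(j)$ and hence to $g_n$. With $g_n$ continuous, $g_n\downarrow0$ pointwise, and $\mathcal{P}(U)\times\mathcal{P}(V)$ compact in the weak topology, Dini's theorem yields the displayed uniform convergence. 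I expect this interchange of the limit with the inf--sup and the infinite sum over the countable state space---forcing us to use compactness and the $W$-drift structure rather than a naive tail truncation---to be the main obstacle.
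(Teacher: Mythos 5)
Your proof is correct, and it is worth noting where it coincides with and where it diverges from the paper's argument. The monotonicity part (induction via monotonicity of $I_{\alpha}$ and of $\min/\max$), the bound $u_n\le\psi_1\le MW$ guaranteeing $u^*\in B_W(S)$, and the easy inequality $u^*\le Tu^*$ are essentially the paper's steps (you are in fact more careful than the paper, which asserts $u^*\in B_W(S)$ without the boundedness argument, and you verify explicitly that $\tilde p(\cdot|i,\mu,\nu)$ is a probability kernel and that telescoping \textbf{(A1)} gives $\sum_j q(j|i,a,b)W(j)\le W(i)$). The hard inequality $Tu^*\le u^*$ is where you take a genuinely different route. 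The paper fixes an arbitrary $\nu\in\mathcal{P}(V)$, selects for each $n$ a minimizing measure $\mu_n^*$ in the inf--sup defining $I_{\alpha}(i,u_n)$, extracts a weakly convergent subsequence $\mu_n^*\to\mu^*$ by compactness of $\mathcal{P}(U)$, and passes to the limit in $n$ with an extension of Fatou's lemma for varying measures (Lemma 8.3.7(b) of Hern\'andez-Lerma and Lasserre); since $\nu$ was arbitrary this gives $u^*\ge Tu^*$. You instead prove that $I_{\alpha}(i,\cdot)$ is continuous along the monotone sequence: with $\Delta_n=u^*-u_{n-1}$ you establish $\sup_{\mu,\nu}\sum_j\tilde p(j|i,\mu,\nu)\Delta_n(j)\to 0$ via Dini's theorem on the compact set $\mathcal{P}(U)\times\mathcal{P}(V)$, using Pratt's lemma to get continuity of each $g_n$ (Pratt here plays precisely the role that the extended Fatou lemma plays in the paper), and then the $1$-Lipschitz estimate for inf--sup and for $\min/\max$ yields $u_n=Tu_{n-1}\to Tu^*$, hence $Tu^*=u^*$ in one stroke. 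Each approach has its advantages: yours is a two-sided (uniform-convergence) argument that delivers order-continuity of $T$ and dispenses with the selection of minimizers, at the cost of invoking compactness of both $\mathcal{P}(U)$ and $\mathcal{P}(V)$ and the full Dini machinery; the paper's one-sided Fatou argument needs compactness only of $\mathcal{P}(U)$ and only a $\liminf$ inequality, which is why it gets away with a subsequence and a citation. Both consume the same hypotheses \textbf{(A2)}(i)--(v), and both implicitly rest on the same continuity-of-averaged-sums fact that you, unlike the paper, spell out.
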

\begin{proof}
Clearly $u_1(i)=T u_0(i)\geq \psi_2(i)=u_0(i)$ for all $i\in S$.
Now suppose  $u_n \geq u_{n-1}$. It is easy to see that $I_{\alpha}(i,\Phi) $ is monotone in $\Phi$. Thus, we have 
\begin{align*}
u_{n+1}(i)=T u_{n}(i) &= \min\{\max\{I_{\alpha}(i,u_n);\psi_2(i)\};\psi_1(i)\} \\
&\geq  \min\{\max\{I_{\alpha}(i,u_{n-1});\psi_2(i)\};\psi_1(i)\} \\
&= T u_{n-1}(i)= u_{n}(i).
\end{align*}
Thus, by induction we have that $\{u_n\}_{n\geq 0}$ is a non-decreasing. Therefore there exists $u^* \in B_W(S)$ such that $u^*(i)= \displaystyle{\lim_{n \to \infty}}u_n(i)$ for all $i \in S$. Now clearly $$T u^{*}(i)\geq T u_{n}(i)=u_{n+1}(i).$$ Taking limit $n \to \infty$ on both sides we get,
$$T u^{*}(i)\geq u^{*}(i)\; \mbox{for all} \; i \in S.$$
Now for the reverse inequality, 
\begin{align*}
 &u_{n+1}(i)=T u_{n}(i) \\
 &= \min  \{\max  \{I_{\alpha}(i,u_n);\psi_2(i)\};\psi_1(i)\} \\
&= \min \Big \{\max \Big \{\sup_{\nu\in \mathcal{P}(V)} \inf_{\mu \in \mathcal{P}(U)} \Big [\dfrac{ \tilde{r}(i,\mu,\nu)}{\alpha+q(i)+1}
+ \dfrac{ q(i)+1}{\alpha+q(i)+1} \sum_{j\in S}\tilde{p}(j|i,\mu,\nu)u_n(j) \Big ];\psi_2(i)\Big \};\psi_1(i)\Big \} \\
 &\geq \min\Big\{\max\Big\{ \Big [\dfrac{ \tilde{r}(i,\mu_n^*,\nu)}{\alpha+q(i)+1} + \dfrac{ q(i)+1}{\alpha+q(i)+1} \sum_{j\in S}\tilde{p}(j|i,\mu_n^*,\nu)u_n(j) \Big ];\psi_2(i)\Big \};\psi_1(i)\Big \},
\end{align*}
where $\nu \in \mathcal{P}(V)$ is arbitrary. The existence of $\mu_n^* \in \mathcal{P}(U)$ is ensured by assumption \textbf{(A2)}. Now since $\mathcal{P}(U)$ is compact, there exists $\mu^* \in \mathcal{P}(U)$ and a subsequence of $\{\mu_n\}$ converging to $\mu^* $ in  $\mathcal{P}(U)$. Thus, by an extension of Fatou's lemma [Lemma 8.3.7(b) in \cite{c9}] we have by letting $n \to \infty$ on both sides,
\begin{align*}
 u^{*}(i) &\geq \min \Big \{\max \Big \{ \Big [\dfrac{ \tilde{r}(i,\mu^*,\nu)}{\alpha+q(i)+1} 
+ \dfrac{ q(i)+1}{\alpha+q(i)+1} \sum_{j\in S}\tilde{p}(j|i,\mu^*,\nu)u^*(j) \Big ];\psi_2(i)\Big \};\psi_1(i)\Big \} \\
&\geq \min \Big \{\max \Big \{ \inf_{\mu \in \mathcal{P}(U)} \Big [\dfrac{ \tilde{r}(i,\mu,\nu)}{\alpha+q(i)+1}
+ \dfrac{ q(i)+1}{\alpha+q(i)+1} \sum_{j\in S}\tilde{p}(j|i,\mu,\nu)u^*(j) \Big ];\psi_2(i)\Big \};\psi_1(i)\Big \}.
\end{align*}
Since the above is true for any $\nu \in \mathcal{P}(V)$. Hence, we have
\begin{eqnarray*}
 u^{*}(i) &\geq  & \min  \{\max  \{I_{\alpha}(i,u^*);\psi_2(i)\};\psi_1(i)\} \\
&=& T u^*(i).
\end{eqnarray*}
Thus, we are done.
\end{proof}
Let $\Phi^* \in \Pi^1$ and  $\Psi^* \in \Pi^2$ be such that
{\small\begin{align*}
H_{\alpha}(i,u^*)= \sup_{\nu\in \mathcal{P}(V)} \Big [ \tilde{r}(i,\Phi^*(i),\nu) 
+  \sum_{j\in S}\tilde{q}(j|i,\Phi^*(i),\nu)u^*(j) \Big ]  
\end{align*}}
and
{\small\begin{align*}
H_{\alpha}(i,u^*)= \inf_{\mu \in \mathcal{P}(U)}\Big [ \tilde{r}(i,\mu,\Psi^*(i)) 
+  \sum_{j\in S}\tilde{q}(j|i,\mu,\Psi^*(i))u^*(j) \Big ].
\end{align*}}
 The existence of $\Phi^* \in \Pi^1$ and  $\Psi^* \in \Pi^2$ follows from assumption \textbf{(A2)} and a measurable selection theorem \cite{c12}.
 
 Define $$ A_1=\{i\in S| u^*(i)=\psi_1(i)\}$$ and $$ A_2=\{i\in S| u^*(i)=\psi_2(i)\}.$$ Let $\{X_t; t\geq 0\}$ be the state process governed by the stationary controls   $\Phi^* \in \Pi^1$ and  $\Psi^* \in \Pi^2$. Let $$ \Theta^{1*}=\inf \{t\geq 0 | X_t \in A_1\}$$ and $$ \Theta^{2*}=\inf \{t\geq 0 | X_t \in A_2\}.$$ Then we have our main theorem.
 \begin{theorem}
Assume \textbf{(A1)} and \textbf{(A2)}. Let $u^*$ be as in Proposition \ref{fp}. Then the stochastic game with stopping and control has a value and $u^*(i)=U(i)=L(i)$. Thus, $u^*$ is the unique fixed point of $T$. Further $((\Phi^*, \Theta^{1*}),(\Psi^*, \Theta^{2*}))$ is a saddle point equilibrium.
 \end{theorem}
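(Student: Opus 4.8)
The plan is a verification argument: I will show that the strategy $(\Phi^*,\Theta^{1*})$ forces the payoff below $u^*$ whatever player $II$ does, and that $(\Psi^*,\Theta^{2*})$ forces it above $u^*$ whatever player $I$ does. Concretely, I aim to prove
$$\sup_{(\Psi,\Theta^2)} J_\alpha(i,\Phi^*,\Psi,\Theta^{1*},\Theta^2)\ \le\ u^*(i)\ \le\ \inf_{(\Phi,\Theta^1)} J_\alpha(i,\Phi,\Psi^*,\Theta^1,\Theta^{2*}).$$
The left inequality yields $U(i)\le u^*(i)$ and the right one yields $u^*(i)\le L(i)$; since the trivial minimax inequality gives $L(i)\le U(i)$, all quantities coincide, so the game has value $u^*$. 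Moreover the two displayed bounds are exactly the optimality conditions defining the saddle point $((\Phi^*,\Theta^{1*}),(\Psi^*,\Theta^{2*}))$.

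For the first inequality, fix an arbitrary $(\Psi,\Theta^2)$ and set $\tau=\Theta^{1*}\wedge\Theta^2$. The key observation is a pointwise bound off the set $A_1$. Because $\Phi^*$ attains the inner infimum in $H_\alpha(\cdot,u^*)$, for every $\nu\in\mathcal P(V)$ and every $i$,
$$\tilde r(i,\Phi^*(i),\nu)+\sum_{j\in S}\tilde q(j|i,\Phi^*(i),\nu)u^*(j)\le H_\alpha(i,u^*),$$
while \eqref{dpi} gives $H_\alpha(i,u^*)\le \alpha u^*(i)$ for every $i\notin A_1$ (equality on the continuation region and $\alpha u^*-H_\alpha\ge 0$ on $A_2$). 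Hence, along the trajectory before $\tau$, where $X_s\notin A_1$,
$$\sum_{j\in S}\tilde q(j|X_s,\Phi^*(X_s),\Psi(X_s))u^*(j)-\alpha u^*(X_s)\le -\tilde r(X_s,\Phi^*(X_s),\Psi(X_s)).$$
Applying Dynkin's formula to $s\mapsto e^{-\alpha s}u^*(X_s)$ on $[0,\tau]$, inserting this inequality, and letting the horizon tend to infinity gives
$$u^*(i)\ge E_i^{\Phi^*,\Psi}\Big[\int_0^{\tau}e^{-\alpha s}\tilde r(X_s,\Phi^*,\Psi)\,ds + e^{-\alpha\tau}u^*(X_\tau)\Big].$$
Finally I compare $u^*(X_\tau)$ with the true terminal payoff: on $\{\Theta^{1*}<\Theta^2\}$ we have $X_\tau\in A_1$, so $u^*(X_\tau)=\psi_1(X_\tau)$, while on $\{\Theta^{1*}\ge\Theta^2\}$ we have $u^*(X_\tau)\ge\psi_2(X_\tau)$; thus $u^*(X_\tau)\ge \psi_1(X_{\Theta^{1*}})1_{\{\Theta^{1*}<\Theta^2\}}+\psi_2(X_{\Theta^2})1_{\{\Theta^{1*}\ge\Theta^2\}}$, and the right-hand side above is exactly $J_\alpha(i,\Phi^*,\Psi,\Theta^{1*},\Theta^2)$. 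Taking the supremum over $(\Psi,\Theta^2)$ proves $U(i)\le u^*(i)$.

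The second inequality is entirely symmetric: using that $\Psi^*$ attains the outer supremum in $H_\alpha(\cdot,u^*)$ and that $H_\alpha(i,u^*)\ge\alpha u^*(i)$ for $i\notin A_2$, one obtains the reversed pointwise bound before $\Theta^1\wedge\Theta^{2*}$, and hence $u^*(i)\le J_\alpha(i,\Phi,\Psi^*,\Theta^1,\Theta^{2*})$ for every $(\Phi,\Theta^1)$, now using $u^*\le\psi_1$ on $A_1$ and $u^*=\psi_2$ on $A_2$ to bound $u^*(X_\tau)$ from above by the terminal payoff. This establishes the value and the saddle point. Uniqueness of the fixed point is then automatic: by Proposition \ref{equiv} any fixed point of $T$ solves \eqref{DPI}, so the same verification argument applied to it (with its own selectors and hitting times) identifies it with $U=L$, the value being intrinsic to the game; hence every fixed point equals $u^*$.

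The main technical obstacle is the rigorous justification of Dynkin's formula and of the passage $t\to\infty$ for a chain with unbounded rates, and this is where Assumptions \textbf{(A1)} and \textbf{(A2)(v)} enter. The Lyapunov chain in \textbf{(A1)} forces $E_i^{\Phi,\Psi}[W(X_t)]$ to grow at most polynomially in $t$, so that $e^{-\alpha t}E_i[u^*(X_t)]\to 0$ and $\int_0^\infty e^{-\alpha t}E_i[\tilde r(X_t,\cdot,\cdot)]\,dt<\infty$ for every $\alpha>0$ (since $u^*\in B_W(S)$ and $r\le MW$); meanwhile $q(i)W(i)\le M\tilde W(i)$ together with the drift bound on $\tilde W$ in \textbf{(A2)(v)} makes the generator term $\sum_{j}\tilde q(j|X_s,\cdot,\cdot)u^*(j)$ integrable and legitimizes both the stochastic-calculus identity and the exchange of limit and expectation (via the extension of Fatou's lemma already invoked in Proposition \ref{fp}). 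Once these integrability facts are secured, the comparison of $u^*(X_\tau)$ with the terminal payoff on the events $\{\Theta^{1*}<\Theta^2\}$ and $\{\Theta^{1*}\ge\Theta^2\}$ is elementary.
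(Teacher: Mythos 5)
Your proposal is correct and takes essentially the same route as the paper's own proof: Dynkin's formula applied to $e^{-\alpha t}u^*(X_t)$ stopped at the hitting time of $A_1$ (resp.\ $A_2$) together with the opponent's stopping time, the inequalities \eqref{dpi} to bound the generator term off the stopping set, comparison of $u^*$ at the stopped state with the terminal payoff, the trivial inequality $L(i)\le U(i)$, and uniqueness of the fixed point via the observation that any fixed point must coincide with the value. If anything, your explicit justification of the integrability needed for Dynkin's formula and the passage $T\to\infty$ (via \textbf{(A1)} and \textbf{(A2)(v)}) is more careful than the paper, which performs these steps without comment.
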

 \begin{proof}
 Let $i \in S$ be such that $ u^*(i)<\psi_1(i)$. Let $\Psi$ be any stationary control of player $II$. Let $\{\tilde{X}_t; t\geq 0\}$ be the process governed by the stationary controls $\Phi^*$ and $\Psi $ and let $ \tilde{\Theta}^1=\inf \{t\geq 0 | \tilde{X}_t \in A_1\}$, $\Theta^2$ be any stopping time of player $II$. Then by Dynkin's formula we get for $T\geq 0$,
 \begin{align*}
& E_i^{\Phi^*,\Psi}\Big [e^{-\alpha (T \wedge \tilde{\Theta}^1 \wedge {\Theta}^2 )} u^*(\tilde{X}_{T \wedge \tilde{\Theta}^1 \wedge {\Theta}^2}) \Big ]-u^*(i) \\
&= E_i^{\Phi^*,\Psi}\int_0^{T \wedge \tilde{\Theta}^1 \wedge {\Theta}^2}e^{-\alpha t} \Big [-\alpha u^*(\tilde{X}_{t}) +  \sum_{j\in S}\tilde{q}(j|\tilde{X}_{t},\Phi^*(\tilde{X}_{t}),\Psi(\tilde{X}_{t}))u^*(j) \Big ]dt \\
 &\leq - E_i^{\Phi^*,\Psi}\int_0^{T \wedge \tilde{\Theta}^1 \wedge {\Theta}^2}e^{-\alpha t} r(\tilde{X}_{t}, U_t, V_t) dt.
\end{align*}
The last inequality is a consequence of the facts that $u^*$ is a fixed point of $T$, Proposition \ref{equiv} and equivalence of \eqref{DPI} and \eqref{dpi}.
Now letting $T \to \infty $ we get,
 \begin{align*}
u^*(i) &\geq E_i^{\Phi^*,\Psi}\int_0^{ \tilde{\Theta}^1 \wedge {\Theta}^2}e^{-\alpha t} r(\tilde{X}_{t}, U_t, V_t) dt\\&\qquad\qquad+ E_i^{\Phi^*,\Psi}\Big [e^{-\alpha (\tilde{\Theta}^1 \wedge {\Theta}^2 )} u^*(\tilde{X}_{ \tilde{\Theta}^1 \wedge {\Theta}^2}) \Big ]\\
&\geq J_{\alpha}(i,\Phi^*,\Psi,\tilde{\Theta}^1,\Theta^2).
\end{align*}
Since the above is true for any strategy $(\Psi, \Theta^2)$ of player $II$, we obtain $u^*(i)\geq U(i)$. Analogously it can be shown that $u^*(i)\leq L(i)$. Thus, we get, $U(i)\leq u^*(i)\leq L(i)$. On the other hand, we have trivially, $L(i)\leq U(i)$. Thus, $u^*(i)=U(i)=L(i)$. The uniqueness of the fixed point follows because we have just shown that any fixed point is the value of the game. Hence, we are done.
\end{proof}
\section{Example} Consider a single server queueing system having original arrival rate $\lambda(\cdot)$ and service rate $\mu(\cdot)$. Suppose that there are two parties or players. Depending on the number of people in the system, which is defined to be the state of the system, player I can modify the service rate by choosing some action $a$, which will result in an increased service rate equal to $\mu(i)+h(a)$, with $h$ being a function on the action space of player I. But this action will also result in a cost rate given by $c_1(i,a)$, if $i$ is the state of the system. On the other hand, player II can modify the arrival rate by choosing some action $b$, which will result in an increased arrival rate given by $\lambda(i)+g(b)$, with $g$ being a function on the action space of player II. The action of player II results in a cost rate given by $c_2(i,b)$. If at any given time there are $i$ customers in the system, then it generates a reward at the rate $c+ri$ for player II. Moreover, at any given time both the players have the option of quiting the system. If player I decides to quit when the state of the system is $i$, then player II gets a terminal reward equal to $\bar{c}+R(i)$ for some function $R(\cdot)$, whereas if player II decides to quit then she receives a terminal reward equal to $c^{\prime}$, with $c^{\prime}<\bar{c}$. Now this system can easily be modelled via the game model considered in this paper. So the transition rates are given by
$q(1|0,a,b)= \lambda(0)+g(b)=-q(0|0,a,b)$, 0 otherwise. For $i\neq 0$,
\begin{align*}
q(j|i,a,b)=\begin{cases}\lambda(i)+g(b)\quad&\mbox{for}\,\,j=i+1,\\
\mu(i)+h(a)\quad&\mbox{for}\,\,j=i-1,\\
-(\lambda(i)+g(b)+\mu(i)+h(a))\quad&\mbox{for}\,\,j=i,\\
0\quad&\mbox{otherwise}\,.
\end{cases} 
\end{align*}The reward rate is given by $r(i,a,b)=c+ri+c_1(i,a)-c_2(i,b)$. The terminal cost functions are given by, $\psi_1(i)=\bar{c}+R(i)$ and $\psi_2(i)=c^{\prime}$. Thus, the above example highlights the importance of the model considered in this paper.



\begin{thebibliography}{99}

\bibitem{c1} A. Bensoussan and J. L. Lions, Applications of Variational Inequalities in Stochastic Control, North Holland, Amsterdam, 1982.
\bibitem{c2} J. M. Bismut, Controle de processus aternate et applications, Z. War. Werw. Greb. 47, 1979, 247-288. 
\bibitem{c23} Wei, Q., Chen, X., Nonzero-sum Games for Continuous-Time Jump Processes Under the Expected Average Payoff Criterion, Appl Math Optim (2019), https://doi.org/10.1007/s00245-019-09572-3.
\bibitem{c17} De Angelis T, Ferrari G, Moriarty J, Nash equilibria of threshold type for two-player nonzero-sum games of stopping, Ann. Appl.
Probab. 28, 2018, 112-147.
\bibitem{c3} E. B. Dynkin, Game variant of a problem on optimal stopping, Sov. Math. Dokl. 10, 1965, 270-274. 
\bibitem{c4} K. Fan, Fixed point and minimax theorems in locally convex topological linear spaces, Proc. Nat. Acad. Sci. 38, 1952, 121-126.
\bibitem{c5} A. Friedman, Stochastic Differential Equations, vol. 2, Academic Press, New York, 1976.
\bibitem{c6} M. K. Ghosh and K. S. Mallikarjuna Rao, Zero-sum stochastic games with stopping and control, Oper. Res. Lett. 35, 2007, 799-804.
\bibitem{c20} Guo, X. P., and Hern\'andez-Lerma, O., Continuous-time Markov Decision Processes: Theory and Applications, Springer, New York, 2009.
\bibitem{c22} Guo, X. P., and Hern\'andez-Lerma, O., New optimality conditions for average-payoff continuous-time Markov games in Polish spaces, Sci. China Math. 54, 2011, 793-816.
\bibitem{c7} X. Guo and O. Hern\'andez-Lerma, Zero-sum continuous-time Markov games with unbounded transition and discounted payoff rates, Bernoulli 16, 2005, 1009-1029.
\bibitem{c8} X. Guo, O. Hern\'andez-Lerma and T. Prieto-Rumeau, A survey of recent results on continuous-time Markov decision processes, Top 14, 2006, 177-261.
\bibitem{c21} Guo, X. P., and Piunovskiy, A., Discounted continuous-time Markov decision processes with constraints: unbounded transition and loss rates. Mathematics of Operations Research, 36, 2011, 105-132.
\bibitem{c19} S. Hamad\`ene and M. Hassani, The multi-player nonzero-sum Dynkin game in discrete time, Math. Methods Oper. Res. 79 (2014), 179-194.
\bibitem{c24} S. Hamad\`ene and H. Wang, BSDEs with two RCLL reflecting obstacles driven by a Brownian Motion and Poisson measure and related mixed zero-sum games, https://arxiv.org/pdf/0803.1815.
\bibitem{c9} O. Hern\'andez-Lerma and J. B. Lasserre, Further Topics on Discrete-Time Markov Control Processes, Springer-Verlag, New York, 1999.
\bibitem{c10} Y. I. Kiefer, Optimal stopped games, Theory Probab. Appl. 16, 1971, 185-189.
\bibitem{c16} Laraki R, Solan E, The value of zero-sum stopping games in continuous time, SIAM J. Control Optim. 43, 2005, 1913-1922.
\bibitem{c11} H. Morimoto, Dynkin games and Martingale methods, Stochastics 13, 1984, 213-218.
\bibitem{c12} A. S. Nowak, Measurable selection theorem for minimax stochastic optimization problems, SIAM J. Control Optim. 23, 1985, 466-476. 
\bibitem{c13} D. Rosenberg, E. Solan and N. Vieille, Stopping games with randomized strategies, Probab. Theory Relat. Fields 119, 2001, 433-451.
\bibitem{c18} E. Shmaya and E. Solan, Two-player nonzero-sum stopping games in discrete time, Ann. Probab. 32 (2006), 2733-2764.
\bibitem{c14} L. Stettner, Zero-sum Markov games with stopping and impulse strategies, Appl. Math. Optim. 9, 1982, 1-24.
\bibitem{c15} Touzi N, Vieille N, Continuous-time Dynkin games with mixed strategies, SIAM J. Control Optim. 41, 2002, 1073-1088.
\end{thebibliography}
\end{document}